\lstdefinestyle{sagestyle}{
    language=Python,
    commentstyle=\color{green!40!black},
    keywordstyle=\color{blue},
    numberstyle=\tiny\color{gray},
    stringstyle=\color{purple},
    basicstyle=\ttfamily\footnotesize,
    breaklines=true,
    captionpos=b,
    keepspaces=true,
    numbers=left,
    numbersep=5pt,
    showspaces=false,
    showstringspaces=false,
    showtabs=false,
    tabsize=4
}
\title{A Novel Summation Formula for the Hurwitz-Kronecker Class Number}
\author{Tsai Yi-Ju }
\newtheorem{theorem}{Theorem}
\newtheorem{example}{Example}
\newtheorem{lemma}{Lemma}
\begin{document}

\begin{abstract}
The purpose of this paper is to present a novel and elegant summation formula for $H_w$, the Kronecker-Hurwitz class number. Specifically, for any prime $p$, we have the formula:
$$
\sum_{t^2<p} H_w(t^2-p) = \frac{p-2}{3}.
$$
\end{abstract}
\maketitle

\section{Introduction}
The study of the sum of class numbers is a classical topic. For the form of $H_w(t^2-4p)$, summation formulas are well-understood; a celebrated result, for instance, is that 
$
\sum_{t^2<4p} H_w(t^2-4p) = 2p.
$
When one considers the seemingly similar $H_w(t^2-p)$, however, the literature becomes remarkably sparse. We have located only one publication featuring such a summation, and the summation formula holds only under certain constraints (See Theorem 5 in~\cite{BROWN20081847}). The formula that is the subject of this paper arose serendipitously from the our work in cryptography~\cite{cryptoeprint:2025/1568}.

\section{The Summation Formula}
\begin{theorem}
For any prime $p$, the following formula holds:
\[
\sum_{t^2<p} H_w(t^2-p) = \frac{p-2}{3}.
\]
The detailed definition of $H_w$ is provided in Appendix A. 
\end{theorem}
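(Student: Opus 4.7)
The plan is to identify the sum with the $p$-th Fourier coefficient of a specific weight-$2$ mock modular form, then apply Zagier's theory of harmonic Maass forms together with Sturm's holomorphic projection to evaluate that coefficient.

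Let $\theta(\tau) = \sum_{t \in \mathbb{Z}} q^{t^2}$ be the standard theta series (weight $1/2$ on $\Gamma_0(4)$) and let $\mathcal{H}(\tau) = -\tfrac{1}{12} + \sum_{n \geq 1} H_w(-n)\,q^n$ be Zagier's mock modular form of weight $3/2$ on $\Gamma_0(4)$. A direct expansion of the product gives
\[
[q^p]\bigl(\theta(\tau)\,\mathcal{H}(\tau)\bigr) = \sum_{t \in \mathbb{Z},\, t^2 \leq p} H_w(t^2-p),
\]
and since no prime is a perfect square the exceptional value $H_w(0) = -\tfrac{1}{12}$ never contributes, so this coincides with the sum in the theorem. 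It therefore suffices to evaluate $[q^p](\theta\mathcal{H})$ for each prime $p$.

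By Zagier's theorem, the non-holomorphic completion $\widehat{\mathcal{H}}$ is a harmonic Maass form of weight $3/2$ on $\Gamma_0(4)$ whose shadow is a constant multiple of $\theta$. Consequently $\theta\widehat{\mathcal{H}}$ is a weight-$2$ harmonic Maass form on $\Gamma_0(4)$, and Sturm's holomorphic projection operator produces an element of the three-dimensional space of weight-$2$ quasi-modular forms on $\Gamma_0(4)$, spanned by $E_2(\tau)$, $E_2(2\tau)$, $E_2(4\tau)$. Matching finitely many Fourier coefficients pins down the linear combination; once this is done, subtracting the explicitly computed shadow contribution yields an identity of the form
\[
\theta(\tau)\,\mathcal{H}(\tau) = \tfrac{1}{3} E_2(\tau) + G(\tau),
\]
where $G$ is an explicit correction characterized by $[q^p]G = -1$ for every prime $p$. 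Extracting the prime coefficient then produces $S(p) = \tfrac{1}{3}\sigma_1(p) - 1 = (p-2)/3$.

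The principal technical obstacle is evaluating the holomorphic projection of the non-holomorphic part $\theta\cdot(\widehat{\mathcal{H}} - \mathcal{H})$, which requires computing period integrals of the incomplete gamma functions arising in Zagier's completion (cf.~Mertens, \emph{Mock modular forms and class number relations}). The structural reason for the clean value $[q^p]G = -1$ is that for prime $p$ the Diophantine constraint $p = t^2 - f^2 = (t-f)(t+f)$ admits up to sign the unique factorization $\{t-f, t+f\} = \{1, p\}$ (and no $f = 0$ solution, since $p$ is not a square), so the shadow contribution collapses to a fixed quantity that combines with the Eisenstein term to yield $-1$ after cancellations. Once the identity above is verified up to the Sturm bound for $M_2(\Gamma_0(4))$, the formula follows; the small primes $p = 2, 3$ (where the weightings $w(-3) = 3$ and $w(-4) = 2$ could potentially require separate treatment) are handled uniformly by this framework.
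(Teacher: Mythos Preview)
Your approach is entirely different from the paper's. The paper counts $\mathbb{F}_p$-points on Montgomery curves $By^2 = x^3 + Ax^2 + x$: it partitions the $(p-1)(p-2)$ admissible parameter pairs $(A,B)$ by trace of Frobenius, invokes a lemma giving $|S_M(p,t)| = 3(p-1)\,H_w\!\bigl((t^2-4p)/4\bigr)$ whenever $4\mid p+1-t$, and then performs a change of variable together with a parity argument (using that $h_w(k)=0$ for $k\equiv 2,3\pmod 4$) to convert the resulting sum over $t^2<4p$ into $\sum_{t^2<p} H_w(t^2-p)$. Your route via $\theta\cdot\mathcal{H}$ and holomorphic projection is a legitimate alternative in principle --- it is precisely the mechanism by which Mertens and others derive Hurwitz--Kronecker relations --- and it would, if carried through, yield a formula valid for all $n$, not only primes.

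As written, however, the proposal has real gaps. A minor but genuine point: $\theta\widehat{\mathcal{H}}$ is \emph{not} a harmonic Maass form --- the weight-$2$ Laplacian does not annihilate a product of a holomorphic form and a harmonic one --- though it is real-analytic of moderate growth, which is what holomorphic projection actually requires. More seriously, you never carry out either of the two computations on which everything rests: you do not determine the coefficients of $E_2(\tau)$, $E_2(2\tau)$, $E_2(4\tau)$ in the holomorphic projection, and you do not compute the contribution of the non-holomorphic part of $\widehat{\mathcal{H}}$. The factorization heuristic $p=(t-f)(t+f)$ is suggestive but is not a computation of the relevant period integrals, and it already misbehaves at $p=2$, which admits no such representation yet must still give $[q^2]G=-1$. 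Finally, the appeal to ``the Sturm bound for $M_2(\Gamma_0(4))$'' is misplaced: the asserted identity $\theta\mathcal{H}=\tfrac{1}{3}E_2+G$ is not an equality of modular forms (the left side is mock and $G$ is simply whatever remains), so agreement at finitely many coefficients proves nothing. Sturm can only pin down the holomorphic projection once you have established that it lies in the quasi-modular space; the shadow correction must be computed in closed form for all $n$, not checked at a few.
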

\begin{example}
   For $p=5,$

   $$\sum_{t^2<p} H_w(t^2-p) = H_w(-1) + H_w(-4) + H_w(-5) + H_w(-4) + H_w(-1) = 1.$$

   Note:  The formula has been verified for the first 10,000 primes using the SageMath code in Appendix B.
\end{example}
\section{The Proof}
\begin{lemma}
Let $p>3$ be a prime. The cardinality of the set $S_M(p,t) = \{ (A,B) \in \mathbb{F}_p^2 \mid |M_{A,B}(\mathbb{F}_p)| = p+1-t \text{ and } B(A^2-4) \neq 0 \}$ is given by the following formula:
\[
|S_M(p,t)| = 
\begin{cases} 
3(p-1)H_w\left(\frac{t^2-4p}{4}\right) & \text{if } 4 \mid p+1-t \text{ and } t^2 < 4p \\
0 & \text{otherwise}
\end{cases},
\]
where $M_{A,B}$ denote a Montgomery curve given by $By^2=x^3+Ax^2+x$ and $H_w$ is the Kronecker-Hurwitz class number.
\end{lemma}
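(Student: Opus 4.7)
The plan is to reduce the count of Montgomery pairs to a count of pointed elliptic curves $(E, T)$ with a rational $2$-torsion point $T$ satisfying a square condition, and then invoke a Deuring-type mass formula. First I would establish the vanishing assertion by showing $4 \mid \#M_{A, B}(\mathbb{F}_p)$ for every nonsingular Montgomery pair, which together with Hasse's bound handles the ``otherwise'' case. The point $(0, 0)$ is always a rational $2$-torsion point; if $A^2 - 4$ is a nonzero square then $x^2 + Ax + 1$ splits rationally and $(\mathbb{Z}/2\mathbb{Z})^2 \subseteq M_{A, B}[2](\mathbb{F}_p)$, while if $A^2 - 4$ is a non-square then $(A^2-4)/B^2$ is a non-square, forcing exactly one of $(A \pm 2)/B$ to be a square and yielding a rational $4$-torsion preimage of $(0, 0)$ at $x = \pm 1$.

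\textbf{Main reduction.} For the non-trivial case I would pass to short Weierstrass form: the explicit isomorphism $(X, Y) \mapsto (B X + A B/3,\, B^2 Y)$ sends $M_{A, B}$ to $E_{a, b}: y^2 = x^3 + a x + b$ with $a = B^2 (3 - A^2)/3$ and $b = B^3(2A^3 - 9A)/27$, and sends $(0, 0)$ to $T = (AB/3, 0)$. A direct computation yields $f'_{E_{a,b}}(AB/3) = B^2 \in (\mathbb{F}_p^*)^2$, so the image $2$-torsion automatically satisfies the intrinsic square condition $f'(e_T) \in (\mathbb{F}_p^*)^2$. Conversely, given any pointed curve $(E, T = (e, 0))$ with $f'(e) \in (\mathbb{F}_p^*)^2$, its Montgomery realizations are parameterized by $\lambda^2 = 1/f'(e)$ and $\mu \in \mathbb{F}_p^*$ via $(A, B) = (3 e \lambda,\, \lambda^3/\mu^2)$; after identifying $\mu \leftrightarrow -\mu$ this yields exactly $p - 1$ distinct Montgomery pairs per pointed $\mathbb{F}_p$-isomorphism class of $(E, T)$. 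Hence $|S_M(p, t)| = (p - 1) N_t$, where $N_t$ denotes the number of $\mathbb{F}_p$-isomorphism classes of pointed curves $(E, T)$ with $\mathrm{tr}(E) = t$, $T \in E[2]^*(\mathbb{F}_p)$, and $f'(e_T)$ a square.

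\textbf{Deuring count and main obstacle.} It remains to prove $N_t = 3\, H_w((t^2 - 4p)/4)$. The intrinsic square condition on $f'(e_T)$ is equivalent to the rationality of the $x$-coordinate of a $4$-torsion preimage of $T$, corresponding to a $\Gamma_1(4)$-type level structure. Writing $t = 2s$ (forced by $4 \mid p + 1 - t$), the conductor rescaling yields $(t^2 - 4p)/4 = s^2 - p$, and Deuring's mass formula applied at this level supplies the required class number identity, with the factor $3$ encoding the average number of Montgomery-compatible $2$-torsion points per curve in the relevant ensemble (consistent with the aggregate check $\sum_t |S_M(p, t)| = (p-1)(p-2)$). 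The main obstacle is this last identification: it requires a careful application of the Deuring-Eichler correspondence for elliptic curves with prescribed rational $4$-torsion structure, including the conductor bookkeeping and the special treatment of the $j = 0$ and $j = 1728$ cases where automorphism groups act nontrivially on $E[2]$.
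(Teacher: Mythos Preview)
The paper does not actually prove this lemma: its entire proof reads ``See Theorem~6 in~[cryptoeprint:2025/1568].'' So there is no in-paper argument to compare your proposal against; the result is imported wholesale from the authors' companion cryptography paper.

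Your plan is the expected route to such a formula and is sound in its verified parts. The divisibility $4\mid\#M_{A,B}(\mathbb{F}_p)$ via either full rational $2$-torsion or a rational $4$-torsion preimage of $(0,0)$ at $x=\pm 1$ is correct (the Montgomery doubling formula indeed gives $x_{2P}=0$ when $x_P=\pm 1$). The Weierstrass transfer is right, and your key computation $f'_{E_{a,b}}(AB/3)=B^2$ checks out, so the square condition on $f'(e_T)$ is exactly the intrinsic characterisation of the marked $2$-torsion points arising from Montgomery models. Your $(p-1)$-to-$1$ count of Montgomery pairs over a pointed isomorphism class is also correct for $e\neq 0$; the $e=0$ (i.e.\ $A=0$, $j=1728$) case collapses to $(p-1)/2$ pairs precisely when $p\equiv 1\pmod 4$, which is consistent with the $h_w(-4)=1/2$ weighting built into $H_w$.

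The genuine gap is the last step, $N_t = 3\,H_w\bigl((t^2-4p)/4\bigr)$. Your justification of the factor $3$ as an ``average number of Montgomery-compatible $2$-torsion points,'' together with the aggregate sanity check $\sum_t|S_M(p,t)|=(p-1)(p-2)$, is heuristic rather than a proof: it does not establish the identity term-by-term in $t$. What is actually needed here is a level-structure refinement of Deuring's theorem that counts, with the correct automorphism weights, $\mathbb{F}_p$-isomorphism classes of pairs $(E,T)$ with $T\in E[2]$ rational and $f'(e_T)$ a square, and identifies this weighted count with $3\,H_w\bigl((t^2-4p)/4\bigr)$. You correctly flag this as the main obstacle; since the paper outsources the whole lemma, you would have to supply (or cite) that refinement yourself to make the argument complete.
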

\begin{proof}
See Theorem 6 in~\cite{cryptoeprint:2025/1568}.
\end{proof}
We now present the proof of Theorem 1.
\begin{proof}

For $p\le3$, the formula can be verified directly. For a prime $p>3$, consider any pair $(A,B) \in \left\{ (A,B) \in \mathbb{F}_p^2 \mid B(A^2-4) \neq 0 \right\}$ defining a non-singular curve. Let $t = p+1 - |M_{A,B}(\mathbb{F}_p)|$. By Hasse's theorem, $t^2 < 4p$, and it is a known property of Montgomery curves that $4 \mid (p+1-t)$. This means that any such pair $(A,B)$ belongs to a set $S_M(p,t)$ where $t$ satisfies the conditions of Lemma 1.

Since a given curve has a unique number of points, its trace $t$ is also unique. Therefore, the sets $S_M(p,t)$ are disjoint for different values of $t$. This establishes that the set of all pairs is the disjoint union of the sets $S_M(p,t)$:
\[
\left\{ (A,B) \in \mathbb{F}_p^2 \mid B(A^2-4) \neq 0 \right\} = \bigcup_{\substack{t^2 < 4p \\ 4 \mid p+1-t}} S_M(p,t).
\]
By taking the cardinality of this union, we get:
$$
(p-1)(p-2) = \sum_{t} |S_M(p,t)| 
= \sum_{\substack{t^2 < 4p \\ 4 \mid p+1-t}} 3(p-1)H_w\left(\frac{t^2-4p}{4}\right).
$$
When $p \equiv 1 \pmod 4$,

\begin{align*}
\sum_{\substack{t^2 < 4p \\ 4 \mid p+1-t}} H_w\left(\frac{t^2-4p}{4}\right) 
&= \sum_{\substack{t^2 < 4p \\ t \equiv 2 \pmod 4}} H_w\left(\frac{t^2-4p}{4}\right) \\
&= \sum_{(4t+2)^2 < 4p} H_w\left(\frac{(4t+2)^2-4p}{4}\right) \\
&= \sum_{(2t+1)^2 < p} H_w\left((2t+1)^2-p\right) \\
&= \sum_{\substack{t^2 < p \\ t \equiv 1 \pmod 2}} H_w\left(t^2-p\right).
\end{align*}
Furthermore, for $p \equiv 1 \pmod{4}$ and even $t$, we have $t^2-p \equiv 0-1 \equiv 3 \pmod 4$. Since $t^2-p$ is an odd integer, any of its square divisors $d^2$ is also odd, which implies $d^2 \equiv 1 \pmod 4$. Thus, the quotient satisfies
\[
\frac{t^2-p}{d^2} \equiv \frac{3}{1} \equiv 3 \pmod 4.
\]
By the definition of the Hurwitz class number, $H_w(t^2-p) = \sum_{d^2 | t^2-p} h_w\left(\frac{t^2-p}{d^2}\right)$. The class number of an order $h_w(k)$ is zero for any negative integer $k \equiv 3 \pmod 4$. It follows that $H_w(t^2-p) = 0$.
Therefore, the sum over all even $t$ is zero:
\[
\sum_{\substack{t^2 < p \\ t \equiv 0 \pmod 2}} H_w(t^2 - p) = 0.
\]

When,  $p \equiv 3 \pmod 4$,
\begin{align*}
\sum_{\substack{t^2 < 4p \\ 4 \mid p+1-t}} H_w\left(\frac{t^2-4p}{4}\right) 
&= \sum_{\substack{t^2 < 4p \\ t \equiv 0 \pmod 4}} H_w\left(\frac{t^2-4p}{4}\right) \\
&= \sum_{(4t)^2 < 4p} H_w\left(\frac{(4t)^2-4p}{4}\right) \\
&= \sum_{(2t)^2 < p} H_w\left((2t)^2-p\right) \\
&= \sum_{\substack{t^2 < p \\ t \equiv 0 \pmod 2}} H_w\left(t^2-p\right)
\end{align*}

Additionally, for $p \equiv 3 \pmod{4}$ and odd $t$, we have $t^2-p \equiv 1-3 \equiv 2 \pmod 4$. The integer $t^2-p$ is divisible by 2 but not by 4. Consequently, any of its square divisors $d^2$ is odd, which implies $d^2 \equiv 1 \pmod 4$. Thus, the quotient satisfies:
\[
\frac{t^2-p}{d^2} \equiv \frac{2}{1} \equiv 2 \pmod 4.
\]
Similarly to the previous case, we have $H_w(t^2-p) = 0$.
Therefore, the sum over all odd $t$ is zero:
\[
\sum_{\substack{t^2 < p \\ t \equiv 1 \pmod 2}} H_w(t^2 - p) = 0.
\]
Thus, 
\begin{align*}
\sum_{t^2<p} H_w(t^2-p) &= \sum_{\substack{t^2<p  \\  t \equiv 0 \pmod 2}} H_w(t^2-p) +  \sum_{\substack{t^2<p  \\  t \equiv 1 \pmod 2}} H_w(t^2-p) \\ 
&=0+\sum_{\substack{t^2 < 4p \\ 4 \mid p+1-t}} H_w\left(\frac{t^2-4p}{4}\right)=\frac{p-2}{3}.
\end{align*}
\end{proof}
\appendix
\newpage
\section{}
The definitions presented here are excerpted directly from~\cite{https://doi.org/10.1112/plms.12069}, as shown below. It is important to note that the definition of $h_w(d)$ for $d \equiv 2, 3 \pmod{4}$ varies in the literature. While some sources leave it undefined, others set it to zero. We adopt the latter convention, as in~\cite{https://doi.org/10.1112/plms.12069} and ~\cite{ZAGIER1974-1975}.

For $d < 0$ with $d \equiv 0, 1 \pmod{4}$, let $h(d)$ denote the class number of the unique quadratic order of discriminant $d$. Let
$$
h_w(d) \overset{\text{def}}{=}
\begin{cases}
    h(d)/3, & \text{if } d = -3, \\
    h(d)/2, & \text{if } d = -4, \\
    h(d),   & \text{if } d < 0, d \equiv 0,1 \pmod{4}, \text{ and } d \neq -3, -4, \\
    0,      & \text{otherwise}
\end{cases}
$$
and for $\Delta<0$ let

$$
H(\Delta) \overset{\text{def}}{=} \sum_{d^2 \mid \Delta} h_w\left(\frac{\Delta}{d^2}\right).
$$
\newpage
\section{}
The following SageMath code was used to verify the summation formula for the first 10,000 primes.
\begin{lstlisting}[style=sagestyle, caption={SageMath code to verify Theorem 1.}]
def h_w(d):
    if d == -3:
        return 1/3
    elif d == -4:
        return 1/2
    elif ((d<0) and (((((d%4)+4)%4) == 0) or ((((d%4)+4)%4) == 1))):
        return BQFClassGroup(d).order()
    else:
        return 0
    return 0
def H_w(D):
    if D == 0:
        return -1/12
    if D > 0:
        return 0
    s = 0
    div_list = divisors(D)
    square_factor_list = [_ for _ in div_list if is_square(_)]
    for i in square_factor_list:
        s += h_w(D//i)
    return s
def sum_of_H_w(p):
    s = 0
    for i in range(-ceil(sqrt(p)),ceil(sqrt(p))):
        s += H_w(i^2-p)
    return s
bool_list=[]
for p in prime_range(1, 104730):
    bool_list.append(3*sum_of_H_w(p) == (p-2))
from collections import Counter
print(Counter(bool_list))
\end{lstlisting}
\newpage
\section*{Acknowledgment}
We would like to thank Gemini for its assistance in translating the manuscript. The core content of this paper, including the proofs, code, and main theorems, was originally written in our native language and subsequently reviewed for accuracy after translation.
\printbibliography
\end{document}